\begin{document}

\newtheorem{thm}{Theorem}[section]
\newtheorem{cor}[thm]{Corollary}
\newtheorem{lem}[thm]{Lemma}
\newtheorem{prop}[thm]{Proposition}
\newtheorem{defn}[thm]{Definition}
\newtheorem{rem}[thm]{Remark}
\newtheorem{example}[thm]{Example}
\newtheorem{question}[thm]{Question}
\newtheorem{algorithm}[thm]{Algorithm}
\newtheorem{problem}[thm]{Problem}
\numberwithin{equation}{section}

\title{The finite basis problem for the monoid of $2 \times 2$ upper triangular tropical matrices
\thanks{Yuzhu Chen, Xun Hu and Yanfeng Luo are partially supported by Natural Science Foundation of China (No. 11371177, 11401275).}}
\author{Yuzhu Chen$^1$, Xun Hu$^{1, 2}$, Yanfeng Luo$^1$\thanks{Corresponding author} and Olga Sapir$^3$\\
\footnotesize\em 1.School of Mathematics and Statistics, Lanzhou University, \\
\footnotesize\em Lanzhou, Gansu, $730000$, P. R. of China\\
\footnotesize\em 2.Department of Mathematics and Statistics, Chongqing Technology and Business University,\\
\footnotesize\em Chongqing, $400033$, P. R. of China\\
\footnotesize\em 3. Department of Mathematics, Vanderbilt University, Nashville, TN37240, USA \\
\footnotesize\em email: olga.sapir@gmail.com\\}
\date{}

\maketitle

\begin{abstract}
For each positive $n$, let  ${\bf u}_n \approx {\bf v}_n$ denote the identity obtained from the Adjan identity $ (xy) (yx) (xy) (xy) (yx) \approx (xy) (yx) (yx) (xy) (yx)$ by substituting $(xy) \rightarrow (x_1 x_2 \dots x_n)$ and $(yx) \rightarrow (x_n \dots x_2 x_1)$. We show that every monoid which satisfies  ${\bf u}_n \approx {\bf v}_n$ for each positive $n$ and generates a variety containing the bicyclic monoid is nonfinitely based.

 This implies that the monoid $U_2(\mathbb{T})$ (resp., $U_2(\overline{\mathbb{Z}})$) of $2 \times 2$ upper triangular tropical matrices over the tropical semiring $\mathbb{T} = \mathbb{R} \cup \{ -\infty\}$ (resp., $\overline{\mathbb{Z}} =\mathbb{Z} \cup \{ -\infty\}$) is nonfinitely based.

\vskip 0.1in

\noindent{\bf 2010 Mathematics subject classification}: 20M07, 03C05

\noindent{\bf Keywords and phrases}: upper triangular tropical matrices, identities, finite basis problem, nonfinitely based, bicyclic monoid, Adjan identity
\end{abstract}

\section{Introduction}

In the past years, tropical algebra (also known as max-plus algebra) as the linear algebra carried out over the tropical semiring has been intensively studied. In particular, the monoid and semiring of all $n\times n$ tropical matrices plays an important role both in theoretical algebraic study and in applications to combinatorics, geometry and semigroup representations, as well as to optimisation and scheduling problems (\cite{But}), formal language and automata theory (\cite{Sim}), control theory (\cite{Coh}) and statistical inference (\cite{Pach}).

 Adjan's identity
$xyyxxyxyyx\approx xyyxyxxyyx$
was introduced in \cite{Adj} by Adjan as the first known and the shortest nontrivial identity satisfied by the bicyclic monoid $\mathfrak B$.
Izhakian and Margolis \cite{Zur} studied the identities of the monoid $U_2(\mathbb{T})$ of all $2 \times 2$ tropical matrices over the tropical semiring $\mathbb{T}$ by the use of tropical algebra and proved that $U_2(\mathbb{T})$ satisfies the Adian's identity. They also proved that $U_2(\mathbb{T})$ contains a copy of the bicyclic monoid thus reproving Adjan's identity in a much simpler way than in \cite{Adj}.

An algebra $A$ is said to be \textit{finitely based} if the set $\mathsf{Id}(A)$
of all identities it satisfies can be derived from a finite subset of $\mathsf{Id}(A)$. Otherwise, it is said to
be \textit{nonfinitely based}. The finite basis problem asks if there is an algorithm to determine when an algebra
is finitely based. Although McKenzie \cite{McKenzie} proved that this problem is undecidable for general
algebras, the problem is still open for many classes of algebras. Since the end of the 1960s, the finite basis
problem for semigroups has been studied intensively (see the survey \cite{Volkov} and recent articles \cite{LiZhangLuo, luozhangja, OS1, zhangliluo2012, zhangliluo}), but  still remains open.

Schneerson \cite{Sch89} studied the identities of the bicyclic monoid and proved that this monoid has an infinite axiomatic rank.  Pastijn \cite{Pastin} described the identities of the bicyclic monoid $\mathfrak B$ in terms of systems of linear
inequalities and proved that every basis of identities for $\mathfrak B$ contains, for every $n \ge 3$, an infinity of identities involving precisely $n$ variables. Hence the bicyclic monoid is nonfinitely based.

Johnson and Kambites \cite{John} and Izhakian and Margolis (unpublished) explored the algebraic structure of $U_2(\mathbb{T})$ and characterized the Green's relations on it. Shitov \cite{Shi} determined the subgroups of the monoid $U_n(\mathbb{T})$ of all tropical $n \times n$ matrices. By using the correspondence between tropical matrices and weighted digraphs, Izhakian \cite{Zur1} proved that $U_n(\mathbb{T})$ satisfies a nontrivial identity and provided a generic construction for classes of such identities.

In 1968, Perkins~\cite{Perkins} established a sufficient condition under which a semigroup is nonfinitely based and used it to prove that the 6-element Brandt monoid $B_{2}^{1}$ is nonfinitely based.
Later, many other sufficient conditions for the nonfinite basis property of semigroups were established.
While most of these conditions are syntactic, some of them are not.
For example, the sufficient condition of Volkov~\cite{MV} which implies the nonfinite basis property of the 6-element semigroup $A_2^g$ is not syntactic.
While most syntactic sufficient conditions are similar to the original Perkins sufficient condition, some of them are not.
For example, the result of M. Sapir~\cite{MS} that a finite semigroup $S$
is inherently nonfinitely based  if and only if every Zimin word (${\bf Z}_1=x_1, \dots, {\bf Z}_{k+1} = {\bf Z}_kx_{k+1}{\bf Z}_k, \dots$)  is an isoterm for $S$
yields a syntactic sufficient condition which is not similar to the Perkins sufficient condition.

Zhang and Luo \cite{Zhang-Luo} proved that the 6-element semigroup $L$ is nonfinitely based
which gives the fourth and the last~\cite{LeeLiZhang} example of a minimal nonfinitely based semigroup.  Lee~\cite{lee} generalized the results of \cite{Zhang-Luo} to a sufficient condition for the nonfinite basis property of semigroups. Article \cite{OS} contains a general method for proving that a semigroup is nonfinitely based. This method works well for proving those sufficient conditions which are similar to the original Perkins sufficient condition. In particular, by using this method O. Sapir reduced the number of requirements in both Perkins' and Lee's sufficient conditions (see \cite[Section 5]{OS}). Recently, Lee modified his sufficient condition into an even weaker sufficient condition under which a semigroup is nonfinitely based (private communication).

Recall that there exist several powerful methods to attack the finite basis
problem for finite semigroups (see \cite{Volkov} for details). But, to the
best of our knowledge, so far the problem has been solved for only a few families of infinite semigroups.  Recently, Auinger et al \cite{Achlv} established a new sufficient condition under which a semigroup (finite or infinite) is nonfinitely based. As an application, it is shown that the Kauffman monoid $K_n$ and the wire monoid $W_n$ either as semigroups or as involution semigroups are nonfinitely based for each $n\geq 3$. This sufficient condition is proved by using the sufficient condition in \cite{MS} and is also different from the Perkins sufficient condition.

In this paper, we present a new sufficient condition (see Theorem \ref{thm:suff} below) under which a semigroup is nonfinitely based.
Let $\sim_S$ denote the fully invariant congruence on the free semigroup $\mathcal X^+$ corresponding to a semigroup $S$. Like all the other sufficient conditions similar to the original Perkins condition, Theorem \ref{thm:suff} exhibits a certain (finite) set of words $W$, a certain set of identities $\Sigma$ in unbounded number of variables and states the following:

$\bullet$ If a monoid $S$ satisfies all the identities in $\Sigma$ and the words in $W$ are $\sim_S$-related to other words in $\mathcal X^+$ in a certain way, then the monoid $S$ is nonfinitely based.

But unlike in most other sufficient conditions in the Perkins club, the set of words $W$ involved in our sufficient
condition contains some words with three non-linear (occurring more than once) variables.

For each positive $n$, let  ${\bf u}_n \approx {\bf v}_n$ denote the identity obtained from the Adjan identity $ (xy) (yx) (xy) (xy) (yx) \approx (xy) (yx) (yx) (xy) (yx)$ by substituting $(xy) \rightarrow (x_1 x_2 \dots x_n)$ and $(yx) \rightarrow (x_n \dots x_2 x_1)$.
Using the sufficient condition in Theorem \ref{thm:suff} we show that every monoid which satisfies  ${\bf u}_n \approx {\bf v}_n$ for each positive $n$ and generates a variety containing the bicyclic monoid $\mathfrak B$ is nonfinitely based (see Theorem \ref{thm:main} below).

We use the result in \cite{Zur1} to show that  $U_2(\mathbb{T})$ satisfies ${\bf u}_n \approx {\bf v}_n$ for each positive $n$.
Thus Theorem \ref{thm:main} and the result of Izhakian and Margolis imply that the monoid $U_2(\mathbb{T})$ (resp., $U_2(\overline{\mathbb{Z}})$) of $2 \times 2$ upper triangular tropical matrices over the tropical semiring $\mathbb{T} = \mathbb{R} \cup \{ -\infty\}$ (resp., $\overline{\mathbb{Z}} =\mathbb{Z} \cup \{ -\infty\}$) is nonfinitely based.

\section{Preliminaries}\label{sec:pre}
Most of the notations and background material used in this paper are given in this section. The reader is referred to \cite{Aki}, \cite{burris} and \cite{howie} for any undefined notation and terminology.

\subsection{Tropical matrices}

Tropical algebra is carried out over the \emph{tropical semiring} $\mathbb{T} = (\mathbb{R} \cup \{ -\infty\}, \oplus, \odot)$ (see, for example, \cite{Cun}), the set $\mathbb{R}$ of real numbers together with minus infinity $ -\infty $, with the addition and multiplication defined as follows
\begin{equation}\label{eq:trop}
  a\oplus b =\max\{ a,b\},~a\odot b = a+b.
\end{equation}
In other words, the \emph{tropical} \emph{sum} of two numbers is their maximum and the \emph{tropical} \emph{product} of two numbers is their sum. It is clear that both the addition and multiplication are commutative.
Furthermore, $\mathbb{T}$ is an additively idempotent semiring, i.e., $a\oplus a = a$ for any $a \in \mathbb{T}$, in which $ -\infty$ is the zero element and $0$ is the unit.

Let $M_n(\mathbb{T})$ be the semiring of all $ n \times n$ matrices with entries in the tropical semiring $\mathbb{T}$,  in which the addition and multiplication are induced from $\mathbb{T}$, as in the familiar matrix construction. It is easy to see that
\[
I =\begin{pmatrix}
    0 &  \cdots & -\infty \\
     \vdots &     \ddots & \vdots  \\
     -\infty &  \cdots   &  0 \\
  \end{pmatrix}_{n\times n} \; \mbox{and}\quad
 O = \begin{pmatrix}
    -\infty &  \cdots & -\infty \\
     \vdots &     \ddots & \vdots  \\
     -\infty &  \cdots   & -\infty \\
  \end{pmatrix}_{n\times n}
\]
are the  unit element and the zero element of $M_{n}(\mathbb{T})$, respectively. In particular, $M_{n}(\mathbb{T})$ is a monoid with respect to its multiplication, and in this paper it is always considered as a monoid.
The submonoid of all upper (resp., lower) triangular tropical matrices is denoted by $U_n(\mathbb{T})$ (resp., $L_n(\mathbb{T}))$.

Let $A=(a_{ij}), B=(b_{ij}) \in M_n(\mathbb{T}) $. $A$ and $B$ are said to be \emph{diagonally equivalent} if $a_{ii} = b_{ii}$ for each $i = 1,2,\ldots ,n$, written as $A \sim_{diag} B$.

\subsection{Semigroup identities}
Let $\mathcal{X}$ be a countably infinite alphabet and let $\mathcal{X}^+$ and $\mathcal{X}^* = \mathcal{X}^+ \cup \{1\}$  be the free semigroup and the free monoid over $\mathcal{X}$ respectively, where $1$ is the empty word. Elements of $\mathcal{X}$ are called \textit{letters} or \textit{variables} and elements of $\mathcal{X}^{*}$ are called \textit{words}.

In this paper, $a, b, c, \ldots, x, y, z$ with or without indices stand for letters and $\textbf{a}, \textbf{b}, \textbf{c}$, $\ldots, \textbf{x}, \textbf{y}, \textbf{z}$  with or without indices stand for words.

Let $x$ be a letter and $\mathbf{w}$ be a word. Then
\begin{itemize}

\item the \textit{content} of $\mathbf{w}$, denoted by
$\textsf{con}(\mathbf{w})$, is the set of all different letters occurring in
$\mathbf{w}$;

\item $\textsf{occ}(x,\mathbf{w})$ is the number of occurrences of the letter $x$ in
$\mathbf{w}$;

\item the \textit{length} of a word $\mathbf{w}$, denoted by $|\mathbf{w}|$, is the number of (not necessarily distinct) letters appearing in $\mathbf{w}$, i.e., $|\mathbf{w}|=\sum_{x\in \textsf{con}(\mathbf{w})}\textsf{occ}(x,\mathbf{w})$;

\end{itemize}

An \textit{identity} is a formal expression $\mathbf{u}\approx \mathbf{v}$ where $\mathbf{u}, \mathbf{v} $
are nonempty words.
We write $\textbf{u} = \textbf{v}$ if $\textbf{u}$ and $\textbf{v}$ are identical words.
We say an identity $\textbf{u}\approx \textbf{v}$ is $\emph{non-trivial}$ if $\textbf{u}\not = \textbf{v}$.  Let $S$ be a semigroup.
An identity $\mathbf{u}\approx \mathbf{v}$ is said to be satisfied by $S$ (written $S\vDash \mathbf{u}\approx
\mathbf{v}$) if the equality $\varphi(\mathbf{u})=\varphi(\mathbf{v})$ holds in $S$ for all possible homomorphisms $\varphi: \mathcal{X}^+\rightarrow S$.
Such a homomorphism is called an $\textit{assignment}$.
We say that $S$ satisfies a set of identities $\Sigma$ (written $S \vDash \Sigma$) if it satisfies every identity in $\Sigma$.
A $\textit{substitution}$ $\theta$ is a semigroup homomorphism $ \theta: \mathcal{X}^+\rightarrow \mathcal{X}^+$ defined by its action on $\mathcal{X} $.

Denote by $\mathsf{Id}(S)$ the set of all identities satisfied by $S$.
Given an identity system $\Sigma$, we denote by $\mathsf{Id}(\Sigma)$ the set of all consequences of $\Sigma$.
An $\textit{identity basis}$ for a semigroup $S$ is any set $\Sigma \subseteq \mathsf{Id}(S)$ such that
$\mathsf{Id}(\Sigma)=\mathsf{Id}(S)$, that is, every identity satisfied by $S$ can be derived from $\Sigma$.
A semigroup $S$ is called $\textit{finitely based}$ if it possesses a finite identity basis, otherwise, $S$ is
said to be $\textit{nonfinitely based}$.

Let $\textbf{w}$ be a word and $\mathcal A = \{x_{1}, x_{2}, \ldots, x_{m}  \} $ be a set of variables. We denote by $\textbf{w}(\mathcal A)$ or $\textbf{w}(x_{1}, \ldots, x_{m} )$ the word obtained from $\textbf{w}$ by deleting every occurrence of the variables in $\mathsf{con}(\textbf{w}) \backslash \mathcal A$.
In this case, we say that the word $\textbf{w}$ $\textit{deletes}$ to the word $\textbf{w}(\mathcal A)$. Note that if a semigroup $M$ is a monoid, that is, contains an identity element, then for any set of variables $\mathcal A$ we have $M \models \textbf{u}(\mathcal A)\approx \textbf{v}(\mathcal A)$ whenever $M \models \textbf{u} \approx \textbf{v}$.

A word $\textbf{u}$ is called an $\textit{isoterm}$ for a semigroup $S$, if
$S \models \textbf{u} \approx \textbf{v}$ if and only if $\textbf{u} = \textbf{v}$.  Note that
if $\textbf{u}$ is an isoterm for a semigroup $S$, then so are all nonempty subwords of $\textbf{u}$.
We say that a set of variables $\mathcal A \subseteq \mathcal X$ is {\em stable} in an identity $\bf u \approx \bf v$ if ${\bf u}(\mathcal A)={\bf v}(\mathcal A)$. Otherwise, we say that the set $\mathcal A$ is {\em unstable} in $\bf u \approx \bf v$.  We say that a set of variables $\mathcal A$ is {\em stable} in a word $\bf u$ with respect to a semigroup $S$ if the set $\mathcal A$ is stable in every identity of $S$ of the form $\bf u \approx \bf v$.
Two variables $x$ and $y$ are said to be {\em adjacent} in a word $\bf u$ if some occurrences of $x$ and $y$ are adjacent in $\bf u$.

\begin{lem} \cite[Fact 3.4 ($(i) \leftrightarrow (v)$)]{OS} \label{fact1} For a monoid $S$ and a word ${\bf u}$
the following conditions are equivalent:

(i) ${\bf u}$ is an isoterm for $S$;

(ii) Each adjacent pair of variables in ${\bf u}$ is stable in ${\bf u}$ with respect to $S$.

\end{lem}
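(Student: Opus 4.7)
The implication (i)$\Rightarrow$(ii) is immediate: if ${\bf u}$ is an isoterm for $S$, every identity ${\bf u}\approx{\bf v}$ of $S$ forces ${\bf v}={\bf u}$, so ${\bf u}(\mathcal{A})={\bf v}(\mathcal{A})$ for every $\mathcal{A}\subseteq\mathcal{X}$, and in particular every adjacent pair of variables in ${\bf u}$ is stable in ${\bf u}$ with respect to $S$.

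For the converse (ii)$\Rightarrow$(i) I argue the contrapositive: assuming ${\bf u}$ is not an isoterm, so $S\models{\bf u}\approx{\bf v}$ for some ${\bf v}\neq{\bf u}$, I exhibit an adjacent pair in ${\bf u}$ that fails to be stable. Since $S$ is a monoid, the assignment sending letters outside any chosen $\mathcal{A}\subseteq\mathcal{X}$ to the identity is admissible, so $S\models{\bf u}(\mathcal{A})\approx{\bf v}(\mathcal{A})$. I use this first to reduce to the case $\textsf{con}({\bf u})=\textsf{con}({\bf v})$ and $\textsf{occ}(x,{\bf u})=\textsf{occ}(x,{\bf v})$ for every $x$, dispatching the edge case in which ${\bf v}$ contains a letter outside $\textsf{con}({\bf u})$ by a direct argument producing an unstable adjacent pair. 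The lemma then reduces to the purely combinatorial assertion that if ${\bf u}\neq{\bf v}$ share content and multiplicities, some adjacent pair $\{x,y\}$ in ${\bf u}$ satisfies ${\bf u}(\{x,y\})\neq{\bf v}(\{x,y\})$.

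For this combinatorial step I would locate the least index $i$ with ${\bf u}_{i}\neq{\bf v}_{i}$, set $a={\bf u}_{i}$, $b={\bf v}_{i}$, and denote by $P$ the common prefix of length $i-1$. Taking $c={\bf u}_{i-1}={\bf v}_{i-1}$ (when $i\geq 2$) and $d={\bf u}_{i+1}$, the pairs $\{a,c\}$ and $\{a,d\}$ are both adjacent in ${\bf u}$ (modulo singleton degeneracies). A careful comparison of the projections onto $\{a,c\}$ and $\{a,d\}$ just after $P$ shows that at least one of them must differ between ${\bf u}$ and ${\bf v}$: the letter $a$ at position $i$ of ${\bf u}$ contributes to both projections, whereas the letter $b$ at position $i$ of ${\bf v}$ either coincides with $c$ or $d$ (yielding an immediate first-character disagreement and hence instability) or is neither, in which case one is forced to advance through ${\bf v}$ in search of a matching contribution and a multiplicity-counting argument breaks stability of one of the two pairs. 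The principal obstacle is the bookkeeping in this case analysis: separate sub-cases are needed for $b\in\{c,d\}$, for $a\in\{c,d\}$ (singleton degeneracy, forcing one to pass to the next adjacent pair along ${\bf u}$), and for the tail configuration of ${\bf v}$ beyond position $i$. Organizing these cleanly, typically by induction on $|{\bf u}|$ and using that the monoid hypothesis propagates adjacent-pair stability to every projection ${\bf u}(\mathcal{A})$—so that ambiguous sub-cases reduce to strictly shorter instances of the same statement—is where the real work lies.
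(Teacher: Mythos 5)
First, a point of reference: the paper does not prove this lemma at all --- it is quoted from \cite[Fact 3.4]{OS} --- so there is no internal proof to compare yours against; your argument has to stand on its own. Your direction (i)$\Rightarrow$(ii) is correct, and the reduction to the case $\mathsf{con}({\bf u})=\mathsf{con}({\bf v})$ with matching multiplicities is sound for a monoid when $|{\bf u}|\ge 2$ (substituting an extraneous letter of ${\bf v}$ by a letter $x$ of ${\bf u}$ destabilizes the singleton $\{x\}$ and hence, by Lemma \ref{fact2} read in reverse, any adjacent pair containing $x$).

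The gap is in the combinatorial core of (ii)$\Rightarrow$(i). Your key claim --- that at the first position $i$ of disagreement, with $a={\bf u}_i$, $c={\bf u}_{i-1}$, $d={\bf u}_{i+1}$, at least one of the adjacent pairs $\{a,c\}$, $\{a,d\}$ must fail to be stable --- is false. Take ${\bf u}=cadb$ and ${\bf v}=cbad$: same content and multiplicities, first difference at position $2$, yet ${\bf u}(a,c)=ca={\bf v}(a,c)$ and ${\bf u}(a,d)=ad={\bf v}(a,d)$. The only unstable adjacent pair of ${\bf u}$ here is $\{d,b\}$, which lies two positions past the disagreement and is invisible to your local analysis, so no ``multiplicity-counting argument'' confined to $\{a,c\}$ and $\{a,d\}$ can close this case. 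The proposed fallback induction is also not well-founded as described: a projection ${\bf u}(\mathcal A)$ acquires \emph{new} adjacencies (in the example, $a$ and $b$ are adjacent in ${\bf u}(a,b)$ but not in ${\bf u}$), so the hypothesis ``every adjacent pair is stable'' is not inherited by ${\bf u}(\mathcal A)$, and the shorter instance you wish to invoke need not satisfy the induction hypothesis. A correct argument must be global --- for instance, locating the relevant occurrence of $b$ in ${\bf u}$ after the common prefix and propagating the disagreement through a chain of consecutive adjacent pairs of ${\bf u}$ --- rather than examining only the two neighbours of position $i$. (A minor further point: for $|{\bf u}|=1$ condition (ii) is vacuous while (i) can fail, so some convention on one-letter words is implicitly in force in \cite{OS}; your reduction also tacitly assumes every letter of ${\bf u}$ belongs to some adjacent pair.)
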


\begin{lem} \cite[Fact 3.5(ii)]{OS} \label{fact2} If a set of variables $\mathcal A$ is stable in an identity ${\bf u} \approx {\bf v}$, then
every subset of $\mathcal A$ is also stable in ${\bf u} \approx {\bf v}$.
\end{lem}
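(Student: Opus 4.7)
The plan is to proceed by a direct calculation using the fact that the deletion operation is a monoid endomorphism of $\mathcal{X}^*$, which composes well with itself when one restricts to smaller alphabets. First I would fix notation: for a set of variables $\mathcal{B} \subseteq \mathcal{X}$, define the endomorphism $\pi_{\mathcal{B}} : \mathcal{X}^* \to \mathcal{X}^*$ on generators by $\pi_{\mathcal{B}}(x) = x$ if $x \in \mathcal{B}$ and $\pi_{\mathcal{B}}(x) = 1$ otherwise. Then by definition $\mathbf{w}(\mathcal{B}) = \pi_{\mathcal{B}}(\mathbf{w})$ for every word $\mathbf{w}$, so the lemma reduces to a statement about these homomorphisms.

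Next, I would establish the key identity $\pi_{\mathcal{A}'} = \pi_{\mathcal{A}'} \circ \pi_{\mathcal{A}}$ whenever $\mathcal{A}' \subseteq \mathcal{A}$. Since both sides are endomorphisms of a free monoid, it suffices to check them on the generators $x \in \mathcal{X}$, and this splits into three cases: if $x \in \mathcal{A}'$, both sides return $x$; if $x \in \mathcal{A} \setminus \mathcal{A}'$, both sides return the empty word (the left side because $x \notin \mathcal{A}'$, the right side because $\pi_{\mathcal{A}}(x) = x$ and then $\pi_{\mathcal{A}'}(x) = 1$); if $x \notin \mathcal{A}$, then $x \notin \mathcal{A}'$ either, and both sides return $1$.

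Finally, I would apply this decomposition to conclude. Assume $\mathcal{A}$ is stable in $\mathbf{u} \approx \mathbf{v}$, that is, $\mathbf{u}(\mathcal{A}) = \mathbf{v}(\mathcal{A})$ as elements of $\mathcal{X}^*$. Applying $\pi_{\mathcal{A}'}$ to both sides of this word equality gives $\pi_{\mathcal{A}'}(\mathbf{u}(\mathcal{A})) = \pi_{\mathcal{A}'}(\mathbf{v}(\mathcal{A}))$, and by the composition identity this is precisely $\mathbf{u}(\mathcal{A}') = \mathbf{v}(\mathcal{A}')$, which is the stability of $\mathcal{A}'$ in the identity.

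Since the argument is essentially a one-line computation once the composition of deletion maps is spelled out, there is no real obstacle in this lemma; the only thing to be slightly careful about is the case analysis verifying $\pi_{\mathcal{A}'} = \pi_{\mathcal{A}'} \circ \pi_{\mathcal{A}}$, where one must use the hypothesis $\mathcal{A}' \subseteq \mathcal{A}$ to handle the middle case cleanly. Everything else is formal manipulation of words.
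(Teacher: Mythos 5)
Your proof is correct. The paper does not actually prove this lemma --- it is quoted as Fact~3.5(ii) from the cited reference [OS] --- and your argument (realizing deletion to $\mathcal{B}$ as the retraction endomorphism $\pi_{\mathcal{B}}$ of the free monoid $\mathcal{X}^{*}$, verifying $\pi_{\mathcal{A}'} = \pi_{\mathcal{A}'}\circ\pi_{\mathcal{A}}$ on generators for $\mathcal{A}'\subseteq\mathcal{A}$, and applying $\pi_{\mathcal{A}'}$ to the word equality $\mathbf{u}(\mathcal{A})=\mathbf{v}(\mathcal{A})$) is exactly the standard justification one would expect for this fact.
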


\section{A sufficient condition under which a semigroup is nonfinitely based}\label{sec: condition}

We say that a word $\bf u$ is {\em applicable} to $\bf U$ if $\Theta({\bf u}) = {\bf U}$ for some substitution $\Theta: \mathcal{X} \rightarrow \mathcal X^+$.

\begin{lem} \cite[Corollary 2.2]{OS} \label{ml} Let $S$ be a semigroup. Suppose that for each $n$ large enough one can find a word ${\bf U}_n$ in at least $n$ variables such that ${\bf U}_n$ is not an isoterm for $S$ but every word $\bf u$ in less than $n/2$ variables applicable to ${\bf U}_n$ is an isoterm for $S$. Then $S$ is nonfinitely based.
\end{lem}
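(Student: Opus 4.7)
The plan is to argue by contradiction, following a classical Perkins-style nonfinite basis pattern adapted to the ``applicability'' framework of the lemma. Suppose $S$ is finitely based with some finite identity basis $\Sigma$, and let $k$ denote the maximum number of distinct variables appearing in any identity of $\Sigma$. I will pick $n$ large enough that both the hypothesis of the lemma applies and $k+2 < n/2$, i.e.\ $n > 2k+4$, and then work with the corresponding word $\mathbf{U}_n$.

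Since $\mathbf{U}_n$ is not an isoterm for $S$, there exists $\mathbf{V}_n \neq \mathbf{U}_n$ with $S \vDash \mathbf{U}_n \approx \mathbf{V}_n$. This identity is a consequence of $\Sigma$, so there is a finite derivation $\mathbf{U}_n = \mathbf{W}_0 \to \mathbf{W}_1 \to \cdots \to \mathbf{W}_m = \mathbf{V}_n$, each arrow rewriting a single factor of the current word using an identity of $\Sigma$ under a substitution. Focus on the first step that actually alters the word; after relabeling, call it $\mathbf{W}_0 \to \mathbf{W}_1$. It is given by some $\mathbf{u} \approx \mathbf{v}$ in $\Sigma$, a substitution $\Theta$, and (possibly empty) words $\mathbf{p}, \mathbf{q}$ with $\mathbf{U}_n = \mathbf{p}\,\Theta(\mathbf{u})\,\mathbf{q}$ and $\mathbf{W}_1 = \mathbf{p}\,\Theta(\mathbf{v})\,\mathbf{q}$ and $\Theta(\mathbf{u}) \neq \Theta(\mathbf{v})$; in particular $\mathbf{u} \neq \mathbf{v}$.

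Next I build a small word applicable to $\mathbf{U}_n$ that encodes the identity $\mathbf{u} \approx \mathbf{v}$ together with its surrounding context. Choose fresh variables $x_0, x_1 \notin \mathsf{con}(\mathbf{u})$ and form $\mathbf{u}' = x_0\,\mathbf{u}\,x_1$ and $\mathbf{v}' = x_0\,\mathbf{v}\,x_1$, with the convention that $x_0$ is omitted when $\mathbf{p}$ is empty and $x_1$ is omitted when $\mathbf{q}$ is empty (in the fully degenerate case just take $\mathbf{u}' = \mathbf{u}$). Extending $\Theta$ by $x_0 \mapsto \mathbf{p}$ and $x_1 \mapsto \mathbf{q}$ witnesses that $\mathbf{u}'$ is applicable to $\mathbf{U}_n$, and $|\mathsf{con}(\mathbf{u}')| \leq k+2 < n/2$. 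By hypothesis $\mathbf{u}'$ is therefore an isoterm for $S$; but $S \vDash \mathbf{u}' \approx \mathbf{v}'$ and $\mathbf{u}' \neq \mathbf{v}'$, which contradicts the isoterm property and finishes the proof.

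The only delicate points are the bookkeeping for possibly empty $\mathbf{p}$ or $\mathbf{q}$, handled by the convention above, and the existence of a ``first altering step,'' which is immediate from $\mathbf{U}_n \neq \mathbf{V}_n$. The main conceptual obstacle — and the point of the lemma — is recognizing the trade-off packaged in ``less than $n/2$ variables applicable to $\mathbf{U}_n$'': any identity of $\Sigma$ that rewrites $\mathbf{U}_n$ necessarily involves few variables, and the factorization above forces its action to be already visible inside a small applicable substructure of $\mathbf{U}_n$, which by hypothesis refuses to be rewritten.
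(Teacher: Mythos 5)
The paper does not actually prove this lemma --- it is quoted from \cite[Corollary 2.2]{OS} --- but your argument is correct and is essentially the standard Perkins-style proof behind that cited result: extract the first altering step of a derivation of a nontrivial identity ${\bf U}_n \approx {\bf V}_n$ from a putative finite basis, absorb the surrounding context into two fresh variables to obtain a word in at most $k+2 < n/2$ variables that is applicable to ${\bf U}_n$, and contradict the isoterm hypothesis. The delicate points (possibly empty context words, the fact that the first altering step forces ${\bf u} \neq {\bf v}$, and the use of distinct fresh variables so the extended substitution is well defined) are all handled correctly.
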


  As in \cite{OS}, given a substitution $\Theta: \mathcal X \rightarrow \mathcal X^+$ and a set of variables $\mathcal Y \subseteq \mathcal X$, we define $\Theta^{-1}(\mathcal Y):= \{x \in \mathcal X \mid \textsf{con}(\Theta(x)) \cap \mathcal Y \ne \emptyset \}$.
The following theorem gives a sufficient condition under which a semigroup is nonfinitely based.

\begin{thm} \label{thm:suff}  Let S be a monoid satisfying the following conditions:

(i) any word in more than one variable of length five is an isoterm for $S$;

(ii) any word in more than two variables applicable to $(xy) (yx) (xy) (xy) (yx)$ is an isoterm for $S$;

(iii) the word $xyz^{i_1} yz^{i_2} xz^{i_3} xyxyz_1^{i_1} yz_1^{i_2} xz_1^{i_3}$ is an isoterm for $S$, where
$z$ and $z_1$ are possibly equal and
$i_1 +i_2 + i_3 = 1$;

(iv) for any positive integer $n$, S satisfies the identity

$(x_1 \dots x_n)(x_n \dots x_1)(x_1 \dots x_n)(x_1 \dots x_n)(x_n \dots x_1)$

$\approx (x_1 \dots x_n)(x_n \dots x_1)(x_n \dots x_1)(x_1 \dots x_n)(x_n \dots x_1).$

Then S is nonfinitely based.
\end{thm}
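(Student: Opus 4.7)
The plan is to apply Lemma~\ref{ml} to the family
\[
\mathbf{U}_n = (x_1 x_2 \cdots x_n)(x_n \cdots x_2 x_1)(x_1 x_2 \cdots x_n)(x_1 x_2 \cdots x_n)(x_n \cdots x_2 x_1),
\]
which uses $n$ variables. Condition (iv) says exactly that $\mathbf{U}_n \approx \mathbf{V}_n$ holds in $S$, where $\mathbf{V}_n$ is obtained by substituting the same two blocks into the right-hand side of Adjan's identity; since this identity is non-trivial, $\mathbf{U}_n$ is not an isoterm for $S$. Hence the remaining task is to prove that every word $\mathbf{u}$ in fewer than $n/2$ variables applicable to $\mathbf{U}_n$ is an isoterm for $S$, after which Lemma~\ref{ml} yields non-finite basedness.

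Fix such a $\mathbf{u}$ and a substitution $\Theta$ with $\Theta(\mathbf{u}) = \mathbf{U}_n$. By Lemma~\ref{fact1}, it suffices to show that every adjacent pair $(y,z)$ of variables of $\mathbf{u}$ is stable in $\mathbf{u}$ with respect to $S$. I would begin with a counting step exploiting $|\textsf{con}(\mathbf{u})| < n/2$: since each $x_i$ appears exactly five times in $\mathbf{U}_n$, pigeonhole forces several $\Theta(y)$ to absorb more than one distinct $x_i$, and the highly constrained block structure $ABAAB$ of $\mathbf{U}_n$ (with $A = x_1\cdots x_n$, $B = x_n\cdots x_1$) then pins down the shape of $\Theta$ on the two variables of any fixed adjacent pair, possibly together with one auxiliary third variable $w$.

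Next, for each adjacent pair $(y, z)$, I would examine the deleted word $\mathbf{u}(\{y,z\})$, or $\mathbf{u}(\{y,z,w\})$ when a third variable is unavoidable, and argue that it is an isoterm for $S$ by matching it against one of the three templates in the hypothesis. Condition (i) is invoked when the deletion has at most five positions and lives in at least two variables. Condition (ii) handles cases where the deletion is applicable to the Adjan word $(xy)(yx)(xy)(xy)(yx)$, which is precisely the shape inherited by adjacent pairs whose $\Theta$-images sit within a single Adjan block or straddle a single block boundary. Condition (iii) is tailored to the remaining three-variable patterns that appear when some $\Theta(y)$ crosses a block boundary while $\Theta(z)$ or $\Theta(w)$ crosses a neighbouring boundary, producing exactly the word $xyz^{i_1}yz^{i_2}xz^{i_3}xyxyz_1^{i_1}yz_1^{i_2}xz_1^{i_3}$. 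Once $\mathbf{u}(\{y,z\})$ or $\mathbf{u}(\{y,z,w\})$ is identified as an isoterm, Lemma~\ref{fact2} gives stability of $\{y,z\}$ in $\mathbf{u}$.

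The main obstacle is the combinatorial case analysis needed to verify that the three templates (i), (ii), (iii) collectively exhaust every pattern an adjacent pair can display under a substitution $\Theta$ applicable to $\mathbf{U}_n$ with $|\textsf{con}(\mathbf{u})| < n/2$. The chosen constants, namely length five in (i), the Adjan shape in (ii), and the explicit word in (iii), are clearly tuned to the five-block structure of $\mathbf{U}_n$, so the real work is the bookkeeping that reduces each configuration of $\Theta(y)$, $\Theta(z)$ relative to the five blocks to one of these three templates. After that reduction, Lemmas~\ref{fact1} and~\ref{ml} deliver the conclusion.
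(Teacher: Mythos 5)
Your overall strategy is indeed the paper's: take $\mathbf{U}_n$ as the non-isoterm supplied by condition (iv), and combine Lemma \ref{ml} with Lemma \ref{fact1} to reduce everything to stabilizing adjacent pairs of variables of $\mathbf{u}$ by matching suitable deletions of $\mathbf{u}$ against the templates in (i)--(iii). But the proposal stops exactly where the proof begins: you state that ``the real work is the bookkeeping'' and do not carry it out, and the one concrete mechanism you do offer for condition (iii) is not the right one. The actual analysis runs as follows. Every adjacent pair of variables of $\mathbf{u}$ lies in $\Theta^{-1}(\{p,q\})$ for some adjacent pair $\{p,q\}$ of letters of $\mathbf{U}_n$, so by Lemma \ref{fact2} it suffices to stabilize each set $\Theta^{-1}(\{p,q\})$ with at least two elements. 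If $p=q$, or if $\Theta^{-1}(\{p,q\})=\{x,y\}$ with $|\mathbf{u}(x,y)|<10$, condition (i) applies; if $|\Theta^{-1}(\{p,q\})|>2$, the deletion $\mathbf{u}(\Theta^{-1}(\{p,q\}))$ is applicable to $(pq)(qp)(pq)(pq)(qp)$ and condition (ii) applies. The only hard case is $\Theta^{-1}(\{p,q\})=\{x,y\}$ with $|\mathbf{u}(x,y)|=10$, so that $\Theta(x)=p$ and $\Theta(y)=q$ with $\{p,q\}=\{x_i,x_{i+1}\}$ or $\{x_1,x_n\}$.

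Here the two ideas you are missing are these. First, the pigeonhole is not that ``several $\Theta(y)$ absorb more than one $x_i$,'' but that, because $|\textsf{con}(\mathbf{u})|<n/2$, some \emph{single} auxiliary variable $z$ must have $\Theta(z)$ containing a descending factor $x_{j+1}x_j$ with $j$ in a suitable range away from $\{p,q\}$ (otherwise the roughly $n/2$ pairwise distinct letters in the relevant half of a block $x_n\cdots x_1$ would each be the image of a distinct single-letter variable, giving too many variables). Second --- and this is the decisive point --- the factor $x_{j+1}x_j$ occurs exactly \emph{twice} in $\mathbf{U}_n$, once in each copy of $x_n\cdots x_1$; hence $z$ occurs at most twice in $\mathbf{u}$, in predictable positions, and $\mathbf{u}(x,y,z)$ is forced to be one of $(xy)z(yx)(xy)(xy)z'(yx)$, $(xy)(yx)z(xy)(xy)(yx)z'$ or $(xy)yzx(xy)(xy)yz'x$ according to whether $i\le n/2$, $n/2<i<n$ or $\{p,q\}=\{x_1,x_n\}$ --- precisely the instantiations $(i_1,i_2,i_3)\in\{(1,0,0),(0,0,1),(0,1,0)\}$ of the condition (iii) template. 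Your account of (iii) as arising from $\Theta$-images ``crossing neighbouring block boundaries'' does not match this mechanism; without the occurs-only-twice observation there is no way to pin the three-variable deletion down to the specific isoterms of (iii), and the argument does not close.
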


\begin{proof} Fix $n$ large enough.
By the assumption, the word
\[
{\bf U}_n = (x_1 \dots x_n)(x_n \dots x_1)(x_1 \dots x_n)(x_1 \dots x_n)(x_n \dots x_1)
\]
is not an isoterm for $S$. Let $\bf u$ be a word in less than $n/2$ variables such that for some substitution $\Theta: \mathcal X \rightarrow \mathcal X ^+$ we have $\Theta({\bf u}) = {\bf U}_n$.
If $|\textsf{con}({\bf u})| = 1$ then ${\bf u} =x$ and therefore, is an isoterm for $S$ by Condition (i). So, we may assume that the word $\bf u$ depends on at least two variables.

In view of Lemma \ref{fact1}, in order to prove that the word ${\bf u}$ is an isoterm for $S$, it is enough to verify that each adjacent pair of distinct variables in $\textsf{con}({\bf u})$ is stable in $\bf u$ with respect to $S$. Since each adjacent pair of variables in $\textsf{con}({\bf u})$ forms a subset of $\Theta^{-1} (\{p,q\})$ for some adjacent pair $\{p,q\}\subset \textsf{con}({\bf U}_n)$, it is enough to verify that for each adjacent pair $\{p,q\}\subset \textsf{con}({\bf U}_n)$ the set $\Theta^{-1} (\{p,q\})$ is stable in $\bf u$ with respect to $S$ whenever the set $\Theta^{-1} (\{p, q\})$ contains at least two variables  (see Lemma \ref{fact2}).

If $p=q$ then the set $\Theta^{-1} (\{p\})$ is stable in $\bf u$ with respect to $S$ because of Condition (i). Now we assume that $p \ne q$. If the set $\Theta^{-1} (\{p, q\})$ contains more than two variables, then it is stable in $\bf u$ with respect to $S$ by Condition (ii).
Now we assume that the set $\Theta^{-1} (\{p, q\})$ contains exactly two variables $x$ and $y$. If $|{\bf u}(x,y)| <10$ then modulo renaming variables ${\bf u}(x,y) \in \{xy, xyx, xyxxy \}$. Since each of these words is an isoterm for $S$ by Condition (i),
the set $\Theta^{-1} (\{p, q\}) = \{x,y\}$ is stable in $\bf u$ with respect to $S$. If $|{\bf u}(x,y)|= 10$ then without loss of generality we may assume that $\Theta(x)=p$ and $\Theta(y)=q$. Consider three cases.

{\bf Case 1}. $\{p,q\} = \{x_i, x_{i+1} \}$ for some $1 \le i \le n/2$.

Since the word $\bf u$ has less than $n/2$ variables, for some letter $z \in \textsf{con}({\bf u})$, $\Theta(z)$ contains the subword  $x_{(j+1)}x_j$ for some $j>n/2$.
Since the subword $x_{(j+1)}x_j$ occurs only twice in ${\bf U}_n$, the word $\bf u$ deletes to some word
$(xy) z (yx) (xy) (xy) z' (yx)$ where $z'$ is possibly equal to $z$. Since by Condition (iii) this word is an isoterm for $S$, the pair $\{x,y\}$ is stable in $\bf u$ with respect to $S$.

{\bf Case 2}. $\{p,q\} = \{x_i, x_{i+1} \}$ for some $n/2 < i < n$.

Since the word $\bf u$ has less than $n/2$ variables, for some letter $z \in
\textsf{con}({\bf u})$, $\Theta(z)$ contains the subword  $x_{(j+1)}x_j$ for some $j \le n/2$.
Since the subword $x_{(j+1)}x_j$ occurs only twice in ${\bf U}_n$, the word $\bf u$ deletes to some word
$(xy) (yx) z (xy) (xy) (yx) z'$ where $z'$ is possibly equal to $z$. Since by Condition (iii) this word is an isoterm for $S$, the pair $\{x,y\}$ is stable in $\bf u$ with respect to $S$.

{\bf Case 3}. $\{p,q\} = \{x_1, x_n \}$.

Since the word $\bf u$ has less than $n/2$ variables, for some letter $z \in \textsf{con}({\bf u})$, $\Theta(z)$ contains the subword  $x_{(j+1)}x_j$ for some $1 < j < n-1$.
Since the subword $x_{(j+1)}x_j$ occurs only twice in ${\bf U}_n$, the word $\bf u$ deletes to some word
$(xy) y z x  (xy) (xy) y z' x$ where $z'$ is possibly equal to $z$. Since by Condition (iii) this word is an isoterm for $S$, the pair $\{x,y\}$ is stable in $\bf u$ with respect to $S$.

Therefore, the monoid $S$ is nonfinitely based by Lemma \ref{ml}.
\end{proof}

\section{Some properties of the identities of the bicyclic monoid $\mathfrak B$} \label{sec:bmon}

The monoid $\mathfrak B = \langle A, B \rangle$, generated by two elements $A$ and $B$ satisfying the relation
\[
AB =1,
\]
where 1 is the identity element, is called the {\em bicyclic monoid}.

Recall that Adjan's identity
\[
xyyxxyxyyx\approx xyyxyxxyyx
\]
was introduced in \cite{Adj} by Adjan as the first known and the shortest nontrivial identity satisfied by the bicyclic monoid. Hence we have
the following.

\begin{lem}\label{isot9}
Any word of length less than $10$ is an isoterm for $\mathfrak B$.
\end{lem}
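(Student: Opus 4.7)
The plan is to reduce the claim directly to the minimality property of the Adjan identity. The key observation is that any nontrivial identity $\mathbf{u}\approx\mathbf{v}$ satisfied by $\mathfrak{B}$ must have $|\mathbf{u}|=|\mathbf{v}|$: the bicyclic monoid contains an element of infinite order (for instance, the generator $A$, since in $\mathfrak{B}$ the powers $A^k$ are pairwise distinct), so substituting every variable by $A$ turns any identity $\mathbf{u}\approx\mathbf{v}$ into the equality $A^{|\mathbf{u}|}=A^{|\mathbf{v}|}$ in $\mathfrak{B}$, forcing $|\mathbf{u}|=|\mathbf{v}|$.

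Given this, suppose $\mathbf{u}$ is a word with $|\mathbf{u}|<10$ and that $\mathfrak{B}\models \mathbf{u}\approx\mathbf{v}$. By the observation above, $|\mathbf{v}|=|\mathbf{u}|<10$ as well. If $\mathbf{u}\neq\mathbf{v}$, then $\mathbf{u}\approx\mathbf{v}$ is a nontrivial identity of $\mathfrak{B}$ in which each side has length strictly less than $10$. But Adjan's identity $xyyxxyxyyx\approx xyyxyxxyyx$ (whose sides each have length $10$) is the shortest nontrivial identity of $\mathfrak{B}$, a contradiction. Hence $\mathbf{u}=\mathbf{v}$, so $\mathbf{u}$ is an isoterm for $\mathfrak{B}$.

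The only thing to be cautious about is the exact meaning of ``shortest nontrivial identity'': I would interpret this (as the authors clearly do) as asserting that no nontrivial identity of $\mathfrak{B}$ has a side of length less than $10$, which is the content quoted from \cite{Adj}. With that interpretation the argument is immediate and no separate case analysis on $|\mathsf{con}(\mathbf{u})|$ or on the shape of $\mathbf{u}$ is needed. There is no real obstacle: the whole proof reduces to combining the length-preserving property of identities of $\mathfrak{B}$ with the minimal length of the Adjan identity.
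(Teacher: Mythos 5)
Your proposal is correct and matches the paper's own (essentially unstated) argument: the paper derives Lemma~\ref{isot9} directly from the quoted fact that Adjan's identity is the shortest nontrivial identity of $\mathfrak B$. You actually supply more detail than the paper does, by justifying via the infinite order of $A$ that identities of $\mathfrak B$ are length-balanced, which is exactly the point needed to make ``shortest'' yield the isoterm property.
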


The next result gives some prohibited identities for $\mathfrak B$.

\begin{lem}\label{lem:bcns}
The bicyclic monoid $\mathfrak B$ does not satisfy the following identities
\[
xy z^{i_1} yz^{i_2}xz^{i_3}  xy xy z^{i_1}yz^{i_2}xz^{i_3}\approx xy z^{i_1} yz^{i_2}xz^{i_3}  yx xy z^{i_1}yz^{i_2}xz^{i_3}
\]
with $ i_1+i_2+i_3=1$.
\end{lem}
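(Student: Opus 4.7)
The plan is to refute each of the three identities (for $(i_1, i_2, i_3) \in \{(1,0,0), (0,1,0), (0,0,1)\}$) by exhibiting an explicit assignment of $x, y, z$ to elements of $\mathfrak B$ that evaluates the two sides to distinct elements. I would work in the normal form: every element of $\mathfrak B$ has a unique representation $B^aA^b$ with $a, b \geq 0$, and all computation reduces to the single relation $AB = 1$, so each verification is a short normal-form calculation.

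The main difficulty is locating suitable witnesses. Writing $\mathbf{m}$ for the image of the subword $z^{i_1} y z^{i_2} x z^{i_3}$, the context $(xy)\,\mathbf{m}\,(\cdot)\,\mathbf{m}$ is strongly absorbent in $\mathfrak B$: many substitutions produce equal values of the two sides even when $xyxy \neq yxxy$. In particular, the most natural choice $x = A$, $y = B$ fails, since $xy = 1$ then reduces the left-hand side to $\mathbf{m}^2$ and the right-hand side to $\mathbf{m}\cdot BA \cdot \mathbf{m}$, and in each of the three cases $\mathbf{m}$ necessarily ends in the letter $A$, so $BA\cdot \mathbf{m} = \mathbf{m}$ and the two sides coincide. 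A successful substitution therefore needs $xy \neq 1$, together with $\mathbf{m}$ in a shape (essentially a pure power of $A$ or $B$) that lets the outer products retain the distinction between $xyxy$ and $yxxy$.

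The witnesses I propose, each verified by a short normal-form computation, are the following. For $(1,0,0)$ (so $\mathbf{m}$ is the image of $zyx$), take $x = A^2$, $y = B^3$, $z = A^3$; using $A^3 B^3 = 1$ one gets $\mathbf{m} = A^2$, and the left- and right-hand sides of the identity evaluate to $BA^2$ and $B^2A^3$ respectively. For $(0,1,0)$ (so $\mathbf{m}$ is the image of $yzx$), take $x = B^2$, $y = A^3$, $z = B^3$; then $\mathbf{m} = B^2$, and the two sides evaluate to $B^3A^2$ and $B^2A$. For $(0,0,1)$ (so $\mathbf{m}$ is the image of $yxz$), take $x = A$, $y = BA$, $z = B^2$; then $\mathbf{m} = B$, and the two sides evaluate to $A$ and $BA^2$. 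Since the two values differ in each case, $\mathfrak B$ fails all three identities.
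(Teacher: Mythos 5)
Your proof is correct and follows the paper's approach of refuting each of the three identities with an explicit assignment: your witnesses for the cases $(1,0,0)$ and $(0,1,0)$ coincide with the paper's, and your third witness $x=A$, $y=BA$, $z=B^2$ (giving $A\neq BA^2$) is a valid alternative to the paper's device of reusing the second assignment, under which $yxz$ also evaluates to $B^2$ and the computation for $(0,0,1)$ repeats verbatim. The only blemish is in your non-essential motivational aside: the relevant cancellation when $x=A$, $y=B$ is $\mathbf{m}\cdot BA=\mathbf{m}$ (right multiplication, valid when $\mathbf{m}$ ends in $A$) rather than $BA\cdot\mathbf{m}=\mathbf{m}$, and in the case $(0,0,1)$ the word $yxz$ ends in $z$ rather than in $x$, so the equality of the two sides there needs the slightly longer normal-form argument --- but none of this affects the three explicit verifications that constitute the actual proof.
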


\begin{proof}
Let $\varphi: \mathcal{X}^+ \rightarrow \mathfrak B$ be the assignment defined by
\[
t \mapsto \left\{
            \begin{array}{cl}
            A^2,& \mbox{ if }  t = x,\\
            B^3,& \mbox{ if }  t = y,\\
            A^3,&  \text{otherwise} .
            \end{array}\right.
\]
Then
\[
\varphi(xyzyxxyxyzyx)=BA^2\neq B^2A^3=\varphi(xyzyxyxxyzyx).
\]

Let $\varphi: \mathcal{X}^+ \rightarrow \mathfrak B$ be the assignment defined by
\[
t \mapsto \left\{
            \begin{array}{cl}
            B^2,& \mbox{ if }  t = x,\\
            A^3,& \mbox{ if }  t = y,\\
            B^3,&  \text{otherwise}.
            \end{array}\right.
\]
Then
\[
\varphi(xyyzxxyxyyzx ) =\varphi(xyyxzxyxyyxz )=B^3A^2
\]
and
\[
\varphi(xyyzxyxxyyzx)= \varphi(xyyxz yxxyyxz)=B^2A.
\]

Therefore, $\mathfrak B$ does not satisfy
\[
xy z^{i_1} yz^{i_2}xz^{i_3}  xy xy z^{i_1}yz^{i_2}xz^{i_3}\approx xy z^{i_1} yz^{i_2}xz^{i_3}  yx xy z^{i_1}yz^{i_2}xz^{i_3}.
\]
\end{proof}

By a FORTRAN program, Shleifer \cite{Shl90} proved that Adjan's identity and identity
\[
xyyxxyyxxy\approx xyyxyxyxxy
\]
are the only two identities in the alphabet $\{x, y\}$ of length $10$ satisfied by the bicyclic monoid. Thus we have

\begin{lem}\label{lem:iso55}
If $\mathfrak B \vDash xyyxxyxyyx \approx \textbf{v}$, then $\textbf{v}=xyyxxyxyyx$ or $xyyxyxxyyx$.
\end{lem}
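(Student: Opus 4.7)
The plan is to reduce this lemma entirely to Shleifer's classification, which is stated in the paragraph immediately preceding the lemma, so the only real work is showing that the side $\mathbf v$ must belong to the finite list it enumerates.

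First I would pin down the shape of $\mathbf v$. Since $\mathfrak B$ is a monoid, the remark in Section~\ref{sec:pre} about deleting variables applies: for any $\mathcal A\subseteq\mathcal X$ the identity $\mathbf u(\mathcal A)\approx \mathbf v(\mathcal A)$ still holds in $\mathfrak B$. Taking $\mathcal A=\{x\}$ and the assignment $x\mapsto A$ gives $A^{5}=A^{\textsf{occ}(x,\mathbf v)}$ in $\mathfrak B$; since the submonoid $\langle A\rangle$ of $\mathfrak B$ is free monogenic, this forces $\textsf{occ}(x,\mathbf v)=5$. The symmetric choice for $y$ gives $\textsf{occ}(y,\mathbf v)=5$, and for any further variable $z\in\textsf{con}(\mathbf v)$ the same trick with $\mathcal A=\{z\}$ forces $\textsf{occ}(z,\mathbf v)=0$. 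Hence $\mathbf v$ is a word of length $10$ in the alphabet $\{x,y\}$ with exactly five occurrences of each letter.

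Second, I would invoke Shleifer's enumeration just cited above the lemma: the only non-trivial identities of length $10$ in the two-letter alphabet satisfied by $\mathfrak B$ are Adjan's identity $xyyxxyxyyx\approx xyyxyxxyyx$ and the identity $xyyxxyyxxy\approx xyyxyxyxxy$, up to the renaming that swaps $x$ and $y$. If $\mathbf v=xyyxxyxyyx$ we are done; otherwise the identity $xyyxxyxyyx\approx \mathbf v$ is non-trivial, so it must coincide with one of these four identities (the two listed plus their $x\leftrightarrow y$ swaps) read in some direction. A direct inspection of the eight candidate sides, $xyyxxyxyyx$, $xyyxyxxyyx$, $xyyxxyyxxy$, $xyyxyxyxxy$, $yxxyyxyxxy$, $yxxyxyyxxy$, $yxxyyxxyyx$, $yxxyxyxyyx$, shows that $xyyxxyxyyx$ appears only as the left-hand side of Adjan's identity; the other candidate sides beginning with $x$ disagree with $xyyxxyxyyx$ at an early position (e.g.\ the sixth letter), and the four candidates beginning with $y$ disagree already at the first letter. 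Thus $\mathbf v$ must be the right-hand side $xyyxyxxyyx$ of Adjan's identity.

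There is essentially no obstacle to overcome: the difficult combinatorics have already been carried out by the FORTRAN computation of \cite{Shl90}, and the lemma is the direct logical consequence of that enumeration together with the elementary content/occurrence bookkeeping in the first paragraph. The only step that requires a tiny bit of care is making sure we consider the $x\leftrightarrow y$ renamings of the two identities Shleifer lists, since those are also identities of $\mathfrak B$ in the alphabet $\{x,y\}$; once that is handled, the inspection is immediate.
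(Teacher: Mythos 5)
Your proof is correct and follows the same route as the paper, which derives the lemma directly from Shleifer's enumeration; the paper simply states it as an immediate consequence, whereas you additionally spell out the (correct) bookkeeping showing that $\mathbf v$ must be a length-$10$ word over $\{x,y\}$ with five occurrences of each letter, and the check that $xyyxxyxyyx$ occurs only as a side of Adjan's identity among the listed candidates. The only nit is trivial: $xyyxxyxyyx$ and $xyyxxyyxxy$ first differ at the seventh letter, not the sixth, but this does not affect the argument.
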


\section{The monoid of $2 \times 2$ upper triangular tropical matrices is nonfinitely based}  \label{sec:upper}

For each positive integer $n$, let
\begin{equation*}\label{eq:suff}
\begin{array}{ccc}
\hspace{-0.7in}\textbf{u}_{n}\hspace{-0.1in} & =\hspace{-0.1in} & (x_1 \cdots  x_n)(x_n\cdots  x_1)(x_1 \cdots  x_n)(x_1 \cdots  x_n)(x_n \cdots  x_1), \vspace{0.05in}\\
\hspace{-0.7in}\textbf{v}_{n}\hspace{-0.1in} & =\hspace{-0.1in} & (x_1 \cdots  x_n)(x_n\cdots   x_1)(x_n\cdots   x_1)(x_1\cdots  x_n)(x_n\cdots  x_1).
\end{array}
\end{equation*}
Set $\Sigma=\{\textbf{u}_{n}\approx\textbf{v}_{n}|\ n\in \mathbb{N}\}$. Denote by $[\Sigma]$ the semigroup variety determined by $\Sigma$.

\begin{thm}\label{thm:main}
Every monoid $M$ such that $\mathfrak{B}\in \mathsf{var}~M \subseteq [\Sigma]$ is nonfinitely based.
\end{thm}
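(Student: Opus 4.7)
The approach is to invoke Theorem \ref{thm:suff} with $S = M$ and verify its four hypotheses. Condition (iv) is precisely the assumption $M \in [\Sigma]$. The hypothesis $\mathfrak{B} \in \mathsf{var}~M$ forces $\mathsf{Id}(M) \subseteq \mathsf{Id}(\mathfrak{B})$, so every isoterm for $\mathfrak{B}$ is automatically an isoterm for $M$; it therefore suffices to verify conditions (i), (ii) and (iii) for $\mathfrak{B}$ in place of $M$.

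Condition (i) is immediate from Lemma \ref{isot9}, since $5 < 10$. For (ii), let $\mathbf{u}$ be a word in at least three variables applicable to $xyyxxyxyyx$ via some substitution $\Theta$; then $|\mathbf{u}| \leq 10$. If $|\mathbf{u}| \leq 9$, Lemma \ref{isot9} applies directly. If $|\mathbf{u}| = 10$, then for any pair $\{a,b\} \subseteq \mathsf{con}(\mathbf{u})$ the projection $\mathbf{u}(a,b)$ omits at least one variable, so $|\mathbf{u}(a,b)| \leq 9$ and $\mathbf{u}(a,b)$ is an isoterm for $\mathfrak{B}$; every adjacent pair of distinct variables in $\mathbf{u}$ is thus stable, and Lemma \ref{fact1} makes $\mathbf{u}$ itself an isoterm.

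Condition (iii) is the main obstacle. Fix $\mathbf{w} = xy\, z^{i_1} y z^{i_2} x z^{i_3}\, xyxy\, z_1^{i_1} y z_1^{i_2} x z_1^{i_3}$ with $i_1+i_2+i_3=1$, allowing either $z = z_1$ or $z \neq z_1$. By Lemma \ref{fact1}, it is enough to show that each adjacent pair of distinct variables in $\mathbf{w}$ is stable with respect to $\mathfrak{B}$. A direct inspection shows that for every pair $\{a,b\} \subseteq \{x,y,z,z_1\}$ other than $\{x,y\}$ the projection $\mathbf{w}(a,b)$ has length strictly less than ten, hence is an isoterm by Lemma \ref{isot9}. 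The critical pair is $\{x,y\}$, whose projection is Adjan's word $xyyxxyxyyx$. Suppose $\mathfrak{B} \models \mathbf{w} \approx \mathbf{v}$. By Lemma \ref{lem:iso55}, $\mathbf{v}(x,y) \in \{xyyxxyxyyx,\, xyyxyxxyyx\}$, and the task is to rule out the second alternative. Under that alternative, the already-established stability of $\{x,z\}$, $\{y,z\}$, $\{x,z_1\}$ and $\{y,z_1\}$ in $\mathbf{w}$ forces the letters $z$ and $z_1$ of $\mathbf{v}$ into specific positions: $z$ must lie between the first and second occurrences of both $x$ and $y$ in $\mathbf{v}$, and $z_1$ between the fourth and fifth. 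This pins $\mathbf{v}$ down uniquely; after identifying $z_1$ with $z$ when they differ, the identity $\mathbf{w} \approx \mathbf{v}$ coincides with one of the identities shown in Lemma \ref{lem:bcns} not to hold in $\mathfrak{B}$, a contradiction. Hence $\{x,y\}$ is stable in $\mathbf{w}$, condition (iii) is verified, and Theorem \ref{thm:suff} concludes that $M$ is nonfinitely based.
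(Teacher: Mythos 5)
Your proposal is correct and follows essentially the same route as the paper: reduce to verifying conditions (i)--(iii) of Theorem~\ref{thm:suff} for $\mathfrak B$, use Lemma~\ref{isot9} for (i) and (ii), and for (iii) combine Lemma~\ref{lem:iso55} with the short projections to pin down $\mathbf v$ and then exclude the bad alternative via Lemma~\ref{lem:bcns}. The only (harmless) differences are that the paper locates $z$ and $z_1$ using the three-variable projections $\mathbf v(x,z,z_1)$ and $\mathbf v(y,z,z_1)$ rather than your four two-variable ones, and that your stated positions for $z$ (``between the first and second occurrences of both $x$ and $y$'') are literally accurate only in the case $i_1=1$, though the uniqueness conclusion holds in all three cases.
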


\begin{proof} Let $M$ be a monoid such that $\mathfrak B \in \mathsf{var}~M \subseteq [\Sigma]$.
In order to show that $M$ is nonfinitely based it is enough to verify that the bicyclic monoid satisfies
the conditions (i)-(iii) of Theorem \ref{thm:suff}.

 If $|\mathbf{u}| = 5$ then $\mathbf{u}$
 is an isoterm for $\mathfrak B$ by Lemma \ref{isot9}. That is, $\mathfrak B$ satisfies the condition (i) of Theorem 3.1.

 Let $\mathbf{u}$ be any word applicable to $(xy)(yx)(xy)(xy)(yx)$. If $|\mathbf{u}| < 10$ then $\mathbf{u}$ is an isoterm for $\mathfrak B$ by Lemma \ref{isot9}.
 If $|\mathbf{u}| = 10$ and $|\mathsf{con}(\mathbf{u})| >2$  then $|\mathbf{u}(z_1, z_2)|<10$ for any $z_1, z_2\in \mathsf{con}(\mathbf{u})$. It follows from Lemma \ref{isot9} that $\mathbf{u}(z_1, z_2)$ is an isoterm for $\mathfrak B$. Therefore, by Lemma \ref{fact1} the word $\mathbf{u}$ is also an isoterm for $\mathfrak B$. That is, $\mathfrak B$ satisfies the condition (ii) of Theorem 3.1.

 Let
\[
\mathfrak B \vDash \mathbf{u}=xyz^{i_1}yz^{i_2}xz^{i_3}xyxyz_1^{i_1}yz_1^{i_2}xz_1^{i_3} \approx \mathbf{v}
 \]
for some word $\mathbf{v}$ and $i_1 +i_2+i_3=1$, where $z$ and $z_1$ are possibly equal. Since $\mathfrak B$ satisfies the identity $\mathbf{u}(x, y)\approx\mathbf{v}(x, y)$, from Lemma \ref{lem:iso55} we have
\[
 \mathbf{v}(x,y)\in \{xyyxxyxyyx, xyyxyxxyyx\}.
\]
Note that $|\mathbf{u}(x,z,z_1)|=|\mathbf{u}(y,z,z_1)|<10$. It follows from Lemma \ref{isot9} that $\mathbf{v}(x,z,z_1)=\mathbf{u}(x,z,z_1)$ and $\mathbf{v}(y,z,z_1)=\mathbf{u}(y,z,z_1)$. Therefore, either $ \mathbf{v}=\mathbf{u}$ or
\[
\mathbf{v}=xy z^{i_1} yz^{i_2}xz^{i_3}  yx xy z_1^{i_1}yz_1^{i_2}xz_1^{i_3}
\]
with $i_1+i_2+i_3=1$. Now from Lemma \ref{lem:bcns} we must have $\mathbf{v}=\mathbf{u}$. That is, $\mathfrak B$ satisfies the condition (iii) of Theorem 3.1.
\end{proof}

Notice that the proof of Theorem \ref{thm:main} yields a short and natural explanation of why the bicyclic monoid $\mathfrak B$ is nonfinitely based \cite{Pastin, Sch89}.

In order to prove that $U_2(\mathbb{T}) \models \Sigma$ we use the following result from \cite{Zur1}.

\begin{lem}\cite[Theorem 4.2]{Zur1}\label{lem:first}
Any two matrices $A,B \in U_2(\mathbb{T})$ such that $A \sim_{\text{diag}} B$ satisfy the identity
\[ AB A AB = AB B AB.\]
\end{lem}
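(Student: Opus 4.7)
The plan is to reduce the matrix identity to an equality of tropical polynomials in the four scalar parameters defining $A$ and $B$, and then check that equality by a short inequality argument. Since diagonal entries of a tropical upper triangular product are just the tropical products (i.e., ordinary sums) of the diagonal entries of the factors, and $A\sim_{\text{diag}} B$, both $ABAAB$ and $ABBAB$ automatically have diagonal $(5a,5b)$; so the claim reduces to showing that their $(1,2)$ entries agree.

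First I would parameterize
\[
A=\begin{pmatrix} a & c \\ -\infty & b \end{pmatrix},\qquad B=\begin{pmatrix} a & d \\ -\infty & b \end{pmatrix},
\]
and establish (by a short induction on length, or equivalently by the standard weighted-path interpretation of tropical matrix multiplication) the formula
\[
(X_{1}X_{2}\cdots X_{k})_{12}\;=\;\max_{1\le i\le k}\bigl((i-1)a+c_{i}+(k-i)b\bigr),
\]
where $X_{j}\in\{A,B\}$ and $c_{j}=(X_{j})_{12}\in\{c,d\}$. This formula is the heart of the matter, as it turns the identity into a purely max-plus statement in four real variables.

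Applying the formula with $k=5$ to the two words $ABAAB$ and $ABBAB$, the five terms being maximized coincide at positions $i=1,2,4,5$ and differ only at $i=3$: the $ABAAB$-list contains $2a+c+2b$ while the $ABBAB$-list contains $2a+d+2b$. So I need to show that each of these ``differing'' terms is tropically dominated by some term that appears in \emph{both} lists. Specifically, both lists contain $a+d+3b$ and $4a+d$, whose max is $d+a+3\max(a,b)$; since $3\max(a,b)\ge a+2b$ (check the cases $a\ge b$ and $b\ge a$), this max is at least $2a+d+2b$. A symmetric argument, using the terms $c+4b$ and $3a+c+b$ common to both lists, shows that the max of those is at least $2a+c+2b$. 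Hence dropping or adding the $i=3$ term does not change the overall maximum in either list, and the $(1,2)$ entries coincide.

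I do not foresee a genuine obstacle: once the path formula for $(\cdot)_{12}$ is written down, the remainder is a two-line verification. The only place requiring a moment of care is the case split in the elementary inequality $3\max(a,b)\ge a+2b$ (and its mirror $3\max(a,b)\ge 2a+b$), which handles in a uniform way both orderings of the off-diagonal growth rates $a-b$ and $b-a$.
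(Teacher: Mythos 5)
Your proof is correct. Note, however, that the paper itself offers no proof of this lemma: it is quoted verbatim as Theorem 4.2 of Izhakian's paper \cite{Zur1}, and the authors simply cite it. So what you have produced is a self-contained verification of an imported result rather than an alternative to an argument in the text. Your route — reducing to the $(1,2)$ entry via the observation that diagonal entries of upper triangular tropical products just add, writing $(X_1\cdots X_k)_{12}=\max_{1\le i\le k}\bigl((i-1)a+c_i+(k-i)b\bigr)$, and then checking that the single differing term at position $i=3$ in each word is dominated by terms common to both lists — is essentially the weighted-path (digraph) viewpoint that Izhakian uses in \cite{Zur1}, specialized to the $2\times 2$ case where it becomes a two-line computation. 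The details all check out: the two lists are $\{c+4b,\ a+d+3b,\ 2a+c+2b,\ 3a+c+b,\ 4a+d\}$ and $\{c+4b,\ a+d+3b,\ 2a+d+2b,\ 3a+c+b,\ 4a+d\}$, and your inequalities $3\max(a,b)\ge a+2b$ and $3\max(a,b)\ge 2a+b$ (each immediate in both orderings of $a$ and $b$, and still valid when entries equal $-\infty$) show that neither third term can ever be the strict maximum. What your approach buys is transparency and independence from the general machinery of \cite{Zur1}; what the citation buys the authors is brevity and a pointer to the general $U_n(\mathbb{T})$ framework. Either way, the lemma stands.
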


\begin{lem}\label{lem:uidentity}
Let $\textbf{u}, \textbf{v} \in \mathcal{X}^+$ such that $\textsf{occ}(x, \textbf{u}) = \textsf{occ}(x, \textbf{v})$ for any $x\in \textsf{con}(\textbf{uv})$. Then
\[
 U_2(\mathbb{T}) \vDash \textbf{u}\textbf{v}\textbf{u} \textbf{u}\textbf{v}\approx \textbf{u}\textbf{v} \textbf{v} \textbf{u}\textbf{v}.
\]
\end{lem}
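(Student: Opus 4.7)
The plan is to reduce the identity $\mathbf{uvuuv}\approx\mathbf{uvvuv}$ to the matrix identity of Lemma \ref{lem:first}. Given any assignment $\varphi:\mathcal{X}^+\to U_2(\mathbb{T})$, set $A=\varphi(\mathbf{u})$ and $B=\varphi(\mathbf{v})$. Then $\varphi(\mathbf{uvuuv})=ABAAB$ and $\varphi(\mathbf{uvvuv})=ABBAB$, so by Lemma \ref{lem:first} it suffices to verify that $A\sim_{\text{diag}}B$ for every such $\varphi$.

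The key observation is that for upper triangular tropical matrices, the diagonal of a product is determined by a commutative (additive) operation on the diagonals of the factors. Concretely, if each letter $x\in\textsf{con}(\mathbf{uv})$ is assigned $\varphi(x)=\begin{pmatrix}a_x & b_x\\ -\infty & c_x\end{pmatrix}\in U_2(\mathbb{T})$, then an easy induction on length (using that $(-\infty)\odot t=-\infty$ kills any cross term) shows that for any word $\mathbf{w}$ over $\textsf{con}(\mathbf{uv})$ the $(1,1)$ entry of $\varphi(\mathbf{w})$ equals the tropical product $\bigodot_{\text{positions}}a_{x}$, which in ordinary arithmetic is $\sum_{x\in\textsf{con}(\mathbf{w})}\textsf{occ}(x,\mathbf{w})\cdot a_x$, and similarly the $(2,2)$ entry equals $\sum_{x\in\textsf{con}(\mathbf{w})}\textsf{occ}(x,\mathbf{w})\cdot c_x$. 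In other words, the diagonal entries of $\varphi(\mathbf{w})$ depend only on the Parikh vector of $\mathbf{w}$.

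The hypothesis $\textsf{occ}(x,\mathbf{u})=\textsf{occ}(x,\mathbf{v})$ for every $x\in\textsf{con}(\mathbf{uv})$ says precisely that $\mathbf{u}$ and $\mathbf{v}$ have the same Parikh vector (note that if some $x\in\textsf{con}(\mathbf{u})\setminus\textsf{con}(\mathbf{v})$ existed, then $\textsf{occ}(x,\mathbf{u})\geq1$ while $\textsf{occ}(x,\mathbf{v})=0$, contradicting the hypothesis, so in fact $\textsf{con}(\mathbf{u})=\textsf{con}(\mathbf{v})$). Hence the diagonal entries of $A=\varphi(\mathbf{u})$ and $B=\varphi(\mathbf{v})$ coincide, that is $A\sim_{\text{diag}}B$. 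Lemma \ref{lem:first} now yields $ABAAB=ABBAB$, i.e.\ $\varphi(\mathbf{uvuuv})=\varphi(\mathbf{uvvuv})$. Since $\varphi$ was arbitrary, $U_2(\mathbb{T})\vDash\mathbf{uvuuv}\approx\mathbf{uvvuv}$.

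There is no real obstacle here; the only step requiring care is the diagonal computation, which is purely a consequence of $-\infty$ being absorbing and of tropical multiplication being ordinary addition on the diagonal. Everything else is a direct invocation of Lemma \ref{lem:first}.
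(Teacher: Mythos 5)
Your proof is correct and follows essentially the same route as the paper: both reduce to Lemma \ref{lem:first} by showing $\varphi(\mathbf{u})\sim_{\text{diag}}\varphi(\mathbf{v})$, the paper via the terse observation that $AB\sim_{\text{diag}}BA$ in $U_2(\mathbb{T})$ and you via the (equivalent, slightly more explicit) computation that the diagonal entries of a product depend only on the Parikh vector. No issues.
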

\begin{proof} Note that $AB\sim_{\text{diag}} BA$ for any $A,B \in U_2(\mathbb{T})$.
Since $\textsf{occ}(x, \textbf{u}) = \textsf{occ}(x, \textbf{v})$ for any $x\in \textsf{con}(\textbf{uv})$, we have $ \varphi(\textbf{u}) \sim_{diag} \varphi(\textbf{v})$ for any assignment $\varphi: \mathcal{X}^+ \rightarrow U_2(\mathbb{T})$. Now the lemma follows from Lemma~\ref{lem:first} immediately.
\end{proof}

\begin{cor}\label{ut_2}
The monoid $U_2(\mathbb{T})$ of $2 \times 2$ upper triangular tropical matrices is nonfinitely based.
\end{cor}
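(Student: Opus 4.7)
The plan is to deduce Corollary~\ref{ut_2} directly from Theorem~\ref{thm:main} by verifying its two hypotheses for $M = U_2(\mathbb{T})$: namely $\mathfrak{B}\in \mathsf{var}\,U_2(\mathbb{T})$ and $U_2(\mathbb{T})\subseteq [\Sigma]$.

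First I would note that the containment $\mathfrak{B}\in \mathsf{var}\,U_2(\mathbb{T})$ is immediate from the Izhakian--Margolis result cited in the introduction, which exhibits a copy of the bicyclic monoid inside $U_2(\mathbb{T})$; since $\mathfrak{B}$ is a submonoid of $U_2(\mathbb{T})$, it lies in the variety generated by $U_2(\mathbb{T})$.

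Next, to show $U_2(\mathbb{T})\models \Sigma$, I would apply Lemma~\ref{lem:uidentity} with the specific choice $\textbf{u}=x_1x_2\cdots x_n$ and $\textbf{v}=x_n x_{n-1}\cdots x_1$. For each $i\in\{1,\dots,n\}$ we have $\textsf{occ}(x_i,\textbf{u})=1=\textsf{occ}(x_i,\textbf{v})$, so the hypothesis of Lemma~\ref{lem:uidentity} is satisfied, and the conclusion $\textbf{u}\textbf{v}\textbf{u}\textbf{u}\textbf{v}\approx \textbf{u}\textbf{v}\textbf{v}\textbf{u}\textbf{v}$ is precisely the identity $\textbf{u}_n\approx \textbf{v}_n$. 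Hence $U_2(\mathbb{T})\models \textbf{u}_n\approx \textbf{v}_n$ for every positive integer $n$, i.e.\ $U_2(\mathbb{T})\in [\Sigma]$.

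With both hypotheses verified, Theorem~\ref{thm:main} applies and yields that $U_2(\mathbb{T})$ is nonfinitely based. There is no real obstacle here: the conceptual work has already been done in Theorem~\ref{thm:main} (reducing nonfinite basedness of $M$ to isoterm conditions on $\mathfrak{B}$) and in Lemma~\ref{lem:uidentity} (which packages the Izhakian identity from Lemma~\ref{lem:first} into a convenient word-combinatorial form). The only thing to check carefully is that the pair $(\textbf{u},\textbf{v}) = (x_1\cdots x_n, x_n\cdots x_1)$ has matching letter-occurrences, which is clear. I would also add a brief remark that the result extends verbatim to $U_2(\overline{\mathbb{Z}})$, since $U_2(\overline{\mathbb{Z}})$ is a submonoid of $U_2(\mathbb{T})$ containing the same bicyclic submonoid and inheriting the identities in $\Sigma$.
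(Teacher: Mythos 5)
Your proposal is correct and follows essentially the same route as the paper: it verifies $U_2(\mathbb{T})\models\Sigma$ by applying Lemma~\ref{lem:uidentity} to $\mathbf{u}=x_1\cdots x_n$, $\mathbf{v}=x_n\cdots x_1$, invokes the Izhakian--Margolis embedding of the bicyclic monoid (the paper writes out the explicit generating matrices, but this is the same fact), and then applies Theorem~\ref{thm:main}. No gaps.
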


\begin{proof} Lemma \ref{lem:uidentity} implies immediately that
for any positive integer $n $, $U_2(\mathbb{T})$ satisfies the identity
\begin{equation*}\label{eq:iden}
\begin{split}
     &{\bf u}_n = (x_1 \cdots  x_n)(x_n\cdots x_1)(x_1 \cdots  x_n)(x_1\cdots  x_n)(x_n\cdots x_1) \\
    \approx& (x_1 \cdots  x_n)(x_n\cdots x_1)(x_n\cdots x_1)(x_1 \cdots  x_n)(x_n\cdots x_1) = {\bf v}_n.
\end{split}
\end{equation*}


Let $\mathfrak B$ be the submonoid of $U_2(\mathbb{T})$ generated by the two elements
\[
A= \left(
     \begin{array}{cc}
       -1 & 1 \\
       -\infty & 1\\
     \end{array}
   \right)\ \makebox{and}\ B= \left(
     \begin{array}{cc}
        1 & 1 \\
       -\infty & -1\\
     \end{array}
   \right).
\]
It is proved in \cite{Zur} that $\mathfrak B$ is a bicyclic monoid.
Therefore, the monoid $U_2(\mathbb{T})$ is nonfinitely based by Theorem \ref{thm:main}.
\end{proof}

Let $\overline{\mathbb{Z}} = (\mathbb{Z}\cup \{ -\infty \}; \oplus, \odot )$ be the tropical semiring over $\mathbb{Z}\cup\{-\infty\}$, in which the addition $\oplus $ and multiplication $\odot$ are defined by~\eqref{eq:trop}. Then the monoid $U_2(\overline{\mathbb{Z}})$ of $2 \times 2$ upper triangular matrices over $\overline{\mathbb{Z}}$ is a submonoid of $ U_2(\mathbb{T})$ and $\mathfrak B$ is a submonoid of $U_2(\overline{\mathbb{Z}})$. It follows from Theorem~\ref{thm:main} that

\begin{cor}\label{thm:UZ}
The monoid $U_2(\overline{\mathbb{Z}})$ is nonfinitely based.
\end{cor}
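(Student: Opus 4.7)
The plan is to apply Theorem \ref{thm:main} directly to $M = U_2(\overline{\mathbb{Z}})$. For this, I need to check the two conditions $\mathfrak{B} \in \mathsf{var}\, U_2(\overline{\mathbb{Z}})$ and $U_2(\overline{\mathbb{Z}}) \in [\Sigma]$, and both can be read off from work already done for $U_2(\mathbb{T})$ in Corollary \ref{ut_2}.

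First I would observe that $U_2(\overline{\mathbb{Z}})$ is a submonoid of $U_2(\mathbb{T})$, since $\overline{\mathbb{Z}} = \mathbb{Z}\cup\{-\infty\}$ is closed under the tropical operations $\oplus$ and $\odot$ as defined in \eqref{eq:trop}. Identities are preserved under taking submonoids, so any identity satisfied by $U_2(\mathbb{T})$ is also satisfied by $U_2(\overline{\mathbb{Z}})$. In particular, by Lemma \ref{lem:uidentity} (as deployed in the proof of Corollary \ref{ut_2}), $U_2(\mathbb{T}) \models {\bf u}_n \approx {\bf v}_n$ for every positive integer $n$, and hence $U_2(\overline{\mathbb{Z}}) \models \Sigma$; equivalently $U_2(\overline{\mathbb{Z}}) \in [\Sigma]$.

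Next I would verify that the bicyclic monoid lies in $\mathsf{var}\, U_2(\overline{\mathbb{Z}})$. For this, it suffices to note that the two matrices
\[
A= \begin{pmatrix} -1 & 1 \\ -\infty & 1 \end{pmatrix}, \qquad
B= \begin{pmatrix}  1 & 1 \\ -\infty & -1 \end{pmatrix}
\]
used in the proof of Corollary \ref{ut_2} to generate a copy of $\mathfrak{B}$ inside $U_2(\mathbb{T})$ actually have all their entries in $\overline{\mathbb{Z}}$. Therefore $\mathfrak{B}$ is (isomorphic to) a submonoid of $U_2(\overline{\mathbb{Z}})$, so $\mathfrak{B} \in \mathsf{var}\, U_2(\overline{\mathbb{Z}})$.

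Combining the two observations gives $\mathfrak{B} \in \mathsf{var}\, U_2(\overline{\mathbb{Z}}) \subseteq [\Sigma]$, so Theorem \ref{thm:main} applies and yields that $U_2(\overline{\mathbb{Z}})$ is nonfinitely based. There is no real obstacle here; the proof is just a matter of pointing out that the embedding of $\mathfrak{B}$ already constructed for $U_2(\mathbb{T})$ happens to use integer-valued matrices, so the same machinery goes through for the integral tropical semiring.
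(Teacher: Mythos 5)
Your proposal is correct and follows essentially the same route as the paper: the paper likewise notes that $U_2(\overline{\mathbb{Z}})$ is a submonoid of $U_2(\mathbb{T})$ (hence lies in $[\Sigma]$) and that the integer-entried matrices $A,B$ exhibit $\mathfrak{B}$ as a submonoid of $U_2(\overline{\mathbb{Z}})$, then invokes Theorem \ref{thm:main}. Your write-up just makes these two steps more explicit.
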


Since for each positive $n$ the identity $\textbf{u}_{n}\approx\textbf{v}_{n}$ has $n$ variables, we get

\begin{cor}
$U_2(\mathbb{T})$ and $U_2(\overline{\mathbb{Z}})$ are both of infinite axiomatic rank.
\end{cor}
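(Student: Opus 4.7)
The plan is to extract from the argument behind Theorem \ref{thm:main} a stronger conclusion: that for every sufficiently large $n$, the identity $\mathbf{u}_n \approx \mathbf{v}_n$ cannot be derived from any set of identities of $U_2(\mathbb{T})$ in fewer than $n/2$ variables. Since $\mathbf{u}_n \approx \mathbf{v}_n$ involves $n$ variables, this immediately implies that no finite upper bound on the number of variables suffices for an identity basis of $U_2(\mathbb{T})$, i.e., the axiomatic rank is infinite. The same reasoning will apply verbatim to $U_2(\overline{\mathbb{Z}})$, which is a submonoid of $U_2(\mathbb{T})$ containing $\mathfrak{B}$ and therefore also satisfies all hypotheses of Theorem \ref{thm:suff} (as verified in the proof of Theorem \ref{thm:main}).

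To prove the key claim, I would fix a positive integer $N$, choose $n > 2(N+2)$, and suppose for contradiction that $\mathbf{u}_n \approx \mathbf{v}_n$ is a consequence of some set $\Sigma \subseteq \mathsf{Id}(U_2(\mathbb{T}))$ whose identities each involve at most $N$ variables. Then a deduction $\mathbf{u}_n = \mathbf{w}_0 \to \mathbf{w}_1 \to \cdots \to \mathbf{w}_k = \mathbf{v}_n$ exists, and, after discarding trivial steps, its first step has the form $\mathbf{u}_n = \mathbf{a}\,\Theta(\mathbf{s})\,\mathbf{b}$ and $\mathbf{w}_1 = \mathbf{a}\,\Theta(\mathbf{t})\,\mathbf{b}$ for some identity $\mathbf{s} \approx \mathbf{t} \in \Sigma$, substitution $\Theta$, and possibly empty words $\mathbf{a}, \mathbf{b} \in \mathcal{X}^*$, with $\Theta(\mathbf{s}) \neq \Theta(\mathbf{t})$ and hence $\mathbf{s} \neq \mathbf{t}$.

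The next step is to pad with at most two fresh letters $p, q \notin \textsf{con}(\mathbf{s}\mathbf{t})$: because $U_2(\mathbb{T})$ is a monoid, the identity $p\,\mathbf{s}\,q \approx p\,\mathbf{t}\,q$ still holds in $U_2(\mathbb{T})$, is still nontrivial (equality $p\mathbf{s}q = p\mathbf{t}q$ in $\mathcal{X}^+$ would force $\mathbf{s} = \mathbf{t}$), and uses at most $N+2$ variables; extending $\Theta$ by $p \mapsto \mathbf{a}$, $q \mapsto \mathbf{b}$, and omitting whichever of $p, q$ corresponds to an empty $\mathbf{a}$ or $\mathbf{b}$, exhibits $p\mathbf{s}q$ as applicable to $\mathbf{u}_n$. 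Since $N+2 < n/2$, this directly contradicts the fact established inside the proof of Theorem \ref{thm:main} that every word in fewer than $n/2$ variables applicable to $\mathbf{u}_n$ is an isoterm for $U_2(\mathbb{T})$. The one delicate point will be the bookkeeping around the cases where $\mathbf{a}$ or $\mathbf{b}$ is empty, so that the padded word stays in $\mathcal{X}^+$; this is easily handled by simply dropping the corresponding padding letter, leaving the rest of the argument unchanged.
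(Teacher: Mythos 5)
Your proposal is correct and takes essentially the approach the paper intends: the corollary is stated with only the one-line justification that each witnessing identity $\mathbf{u}_n\approx\mathbf{v}_n$ involves $n$ variables, and that justification rests on exactly the derivation-unwinding you spell out (take the first nontrivial step of a hypothetical deduction from identities in few variables, pad the applied identity with fresh letters to absorb the context, and contradict the fact, established in the proofs of Theorems \ref{thm:suff} and \ref{thm:main}, that every word in fewer than $n/2$ variables applicable to $\mathbf{U}_n$ is an isoterm). This is the standard argument behind Lemma \ref{ml} with the variable count tracked, and your handling of the empty-context cases and of $U_2(\overline{\mathbb{Z}})$ is sound.
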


\noindent{\bf Acknowledgements} Yuzhu Chen, Xun Hu and Yanfeng Luo would like to thank Dr. Jianrong Li for his helpful discussion. The authors would also like to thank Dr. Gili Golan for finding a hole in the proof of Theorem \ref{thm:suff}.

\end{document}